\newtheorem{MainTheorem}{Theorem}
\newtheorem{lem}{Lemma}[section]
\newtheorem{teo}[lem]{Theorem}
\newtheorem{prop}[lem]{Proposition}
\newtheorem{rema}[lem]{Remark}
\numberwithin{equation}{section}
\newcommand{\Zbb}{\ensuremath{\mathbb{Z}}}
\newcommand{\Rbb}{\ensuremath{\mathbb{R}}}
\DeclareMathOperator{\divv}{div}
\begin{document}

\title[Upper bounds for the number of
limit cycles]{Upper bounds for the number of
limit cycles of some planar polynomial differential systems}
\author {Armengol Gasull}
\address{ Dept. de Matem\`{a}tiques\\
Universitat Aut\`{o}noma de Barcelona\\ Edifici C\\ 08193 Bellaterra,
Barcelona (Spain)} \email{gasull@mat.uab.cat}

\author{Hector Giacomini}
\address{Laboratoire de Math\'{e}matique et Physique Th\'{e}orique\\
C.N.R.S. UMR 6083\\ Facult\'{e} des Sciences et Techniques\\
 Universit\'{e} de Tours\\
 Parc de Grandmont \\
 37200 Tours, France}
\email{Hector.Giacomini@lmpt.univ-tours.fr}

\thanks{}

\subjclass[2000]{Primary: 34C05, 34C07. Secondary: 37C27.}

\keywords{Polynomial differential system, limit cycle, Dulac
function}
\date{March, 2008}
\dedicatory{}
\commby{}


\begin{abstract}
We give an effective method for controlling the maximum number of
limit cycles of some planar polynomial systems. It is based on a
suitable choice of a Dulac function and the application of the
well-known Bendixson-Dulac Criterion for multiple connected regions.
The key point is a new approach to control the sign of the functions
involved in the criterion.
 The method is  applied to several  examples.
\end{abstract}

\maketitle

\section{Main result}

One of the few general methods that allows to give upper bounds for
the number of limit cycles of planar  differential systems
\begin{equation*}\Dot x=P(x,y),\,\quad\Dot y= Q(x,y) \end{equation*}
 is the use of
Dulac functions in multiple connected regions, see
\cite{Cherkas,CG1,CG2, GG, Lloyd, Yamato}. Recall that the primer
idea  is that when the function $\divv(P,Q)$ does not vanish on a
simply connected region ${\mathcal U}\subset\mathbb{R}^2,$ then the
above differential system  has no periodic orbit totally contained
in $\mathcal U.$ We state the general Bendixson-Dulac Criterion in
next section, see Theorem~\ref{A}. The main difficulty for practical
uses of this result is that it is needed to find a (Dulac) function
$g$ such that $\divv(gP,gQ)$ does not vanish on a suitable set. This
paper gives a quite general result for polynomial differential
systems, see Theorem~\ref{MT}. Its proof is based on a ``good"
choice of a Dulac function. As we will see in Section~\ref{se3} this
result provides  a constructive way for giving upper and lower
bounds for the number of limit cycles of a large class of planar
polynomial systems.

Given a polynomial $p(s)\in\mathbb{R}[s],$ we will say that the
couple $(k,w(r))\in\mathbb{R}^+\times\mathbb{R}[r]$ is a {\it Dulac
pair} of $p(s)$ if
\[
p_{k,w}(r):=r p(r^2)w'(r)-2k\left(p(r^2)+ r^2p'(r^2)\right)w(r)<0\quad \mbox{for all}\quad r>0.
\]
As we will see in Lemma \ref{lema} and in the proof of Proposition
\ref{B}, the above inequality implies that the function
$|w(r)|^{-1/k}$ is a Dulac function, in any of the connected
components of $\mathbb{R}^2\setminus\{w(r)=0\},$ for the  systems
that in polar coordinates, $r$ and $\theta,$  writes as $\dot
r=rp(r^2), \,\dot\theta =\hat p(r^2).$  Here  $\hat p$ is any
arbitrary polynomial. We remark that there is no need to introduce
this Dulac pair to study the phase portrait of this  system. For
instance their limit cycles are given by the positive zeros of $p$
that are not  zeros of $\hat p.$ Nevertheless, as we will see in
next theorem, Dulac pairs are useful to study more general systems.

\begin{MainTheorem}\label{MT}
Consider the  polynomial differential system
\begin{equation}\label{eq2}\Dot x=P(x,y),\,\quad\Dot y= Q(x,y), \end{equation}
where  $P$ and $Q$ are  real polynomials of degree $n,$ satisfying
that $P(0,0)=Q(0,0)=0.$ In polar coordinates it writes  as
\begin{align}\label{eq3}
  \dot r&=R(r,\theta):=P(r\cos\theta,r\sin\theta)\cos\theta+Q(r\cos\theta,r\sin\theta)\sin\theta,\nonumber\\
  \dot \theta&=\Theta(r,\theta):=\frac{1}{r}\Big(Q(r\cos\theta,r\sin\theta)\cos\theta
  -P(r\cos\theta,r\sin\theta)\sin\theta\Big).\nonumber
\end{align}
Define the polynomial
\begin{equation*}
p(r^2):=\frac1{2\pi r}\int_0^{2\pi} R(r,\theta)\,d\theta
\end{equation*}
and assume that $p(s)$ has the Dulac pair $(k, w(r)),$ where $w$ is
a polynomial of degree $d.$ For these $k$ and $w$ consider the
function
\begin{align}\label{m}
M(r,\theta,k,w):&=R(r,\theta)w'(r)-k\left(\frac{\partial R(r,\theta)}{\partial r}
+\frac{\partial \Theta(r,\theta)}{\partial \theta}+\frac{R(r,\theta)}{r}\right)w(r)\nonumber\\
&=:\sum_{i=1}^{n+d-1} M_i(\theta,k,w)r^i,
\end{align}
and, for any $i\ge1,$ let $m_i(k,w)$ be such that
$\max_{\theta\in[0,2\pi]}M_i(\theta,k,w)\le m_i(k,w) .$

Let $m^+$ be the number of non-negative roots of $w.$ Then, if the
polynomial \[\Phi_{k,w}(r):=\sum_{i=1}^{n+d-1} m_i(k,w)r^i\] is
negative   for all $r\in(0,\infty),$ system~\eqref{eq2} has at most
$m^+$  limit cycles and  all of them are hyperbolic.
\end{MainTheorem}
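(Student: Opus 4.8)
The plan is to apply the Bendixson–Dulac Criterion for multiply connected regions (Theorem~\ref{A}) with the Dulac function $V(x,y):=|w(r)|^{-1/k}$, where $r=\sqrt{x^2+y^2}$, and to show that the relevant divergence $\divv(VP,VQ)$ has a definite sign on each connected component of $\mathbb{R}^2\setminus\{w(r)=0\}$. The first step is to compute $\divv(VP,VQ)$ and relate it to the quantity $M(r,\theta,k,w)$ defined in~\eqref{m}. Since $V=|w(r)|^{-1/k}$ depends only on $r$, the chain rule gives
\begin{equation*}
\divv(VP,VQ)=V\,\divv(P,Q)+\nabla V\cdot(P,Q),
\end{equation*}
and writing $\divv(P,Q)$ and the advective term $\nabla V\cdot(P,Q)=V'(r)\,R(r,\theta)$ in polar coordinates — using that $\divv(P,Q)=\partial_r R+\partial_\theta\Theta+R/r$ — I expect the bracketed expression to factor as $V(r)\cdot|w(r)|^{-1}\cdot M(r,\theta,k,w)$ up to a positive constant depending on $k$. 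Concretely, $V'(r)=-\tfrac1k|w(r)|^{-1/k-1}\sgn(w)w'(r)$, so after collecting terms the sign of $\divv(VP,VQ)$ on each component is governed by the sign of $M(r,\theta,k,w)$ (the exact algebraic verification that the $w'$ and $w$ terms combine into $M$ is the routine computation I would perform but not grind through here).

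Once the divergence is expressed through $M$, the next step is the sign control, which is the crux of the method. Expanding $M$ as the polynomial $\sum_{i=1}^{n+d-1}M_i(\theta,k,w)r^i$ in~\eqref{m} and bounding each angular coefficient by its maximum $m_i(k,w)$, I get for every $r>0$ and every $\theta$
\begin{equation*}
M(r,\theta,k,w)\le\sum_{i=1}^{n+d-1}m_i(k,w)\,r^i=\Phi_{k,w}(r).
\end{equation*}
Hence the hypothesis $\Phi_{k,w}(r)<0$ for all $r\in(0,\infty)$ forces $M<0$, and therefore $\divv(VP,VQ)$ is strictly negative (one sign only, not zero) on all of $\mathbb{R}^2\setminus\{w(r)=0\}$, independently of which component we sit in. This uniform sign is exactly the input Theorem~\ref{A} requires.

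The third step is to count the number of disjoint regions and invoke the criterion. The curves $\{w(r)=0\}$, i.e.\ $\{r=r_j\}$ for the non-negative roots $r_j$ of $w$, are concentric circles centered at the origin; there are $m^+$ of them, and they partition $\mathbb{R}^2$ (minus the circles) into at most $m^+$ bounded annular/disk regions plus one unbounded region. Since $\divv(VP,VQ)$ does not vanish on each such region, the Bendixson–Dulac Criterion for multiply connected regions bounds the number of limit cycles contained in (or surrounding) these regions; standard bookkeeping with the connectivity of the complement of $m^+$ nested circles yields the bound of $m^+$ limit cycles. Finally, hyperbolicity follows because a non-vanishing Dulac divergence forces the integral $\oint\divv(P,Q)\,dt$ around any limit cycle to be nonzero (equivalently the characteristic exponent is nonzero), so every limit cycle is hyperbolic.

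The main obstacle I anticipate is not the divergence computation, which is mechanical, but rather the careful combinatorial/topological bookkeeping in the last step: one must verify that $m^+$ concentric circles really do produce a configuration whose connectivity, when fed into the multiply connected Bendixson–Dulac Criterion, yields precisely $m^+$ as the upper bound rather than something larger, and that limit cycles crossing the circles $\{r=r_j\}$ are correctly accounted for (a limit cycle of the original Cartesian system need not be a circle, so one must control how it meets the zero set of $w$). Handling the possible root $r=0$ of $w$ and the behavior of $V=|w(r)|^{-1/k}$ near the circles $\{w=0\}$ — where $V$ blows up — is the delicate point that ties the analytic sign condition to the topological count.
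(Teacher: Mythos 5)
Your outline follows essentially the paper's route: the paper packages the argument in Proposition~\ref{B} (whose proof is precisely your direct application of Theorem~\ref{A} with $g=|w(r)|^{-1/k}$ on each connected component of $\mathbb{R}^2\setminus\{w(r)=0\}$, via Lemma~\ref{le2} for the polar form of the divergence), and your component count matches the paper's ($m^+$ components, all 1-punctured, when $w(0)=0$; one simply connected disc plus $m^+$ 1-punctured rings when $w(0)\ne0$, the disc contributing nothing). One slip, though: the computation gives $\divv(VX)=\frac{-\sgn(w)}{k}\,|w|^{-1/k-1}M(r,\theta,k,w)$, so under $M<0$ the divergence is \emph{positive} where $w>0$ and negative where $w<0$; your assertion that it is ``strictly negative \ldots\ independently of which component we sit in'' is false. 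This is harmless for the upper bound, since Theorem~\ref{A} is applied to each component separately and needs only a fixed sign there, but the sign is not uniform, and indeed the stability of the cycles (Remark~\ref{re1}) is governed by the sign of $w$ on each ring.

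The genuine gap is the step you yourself flag as the ``delicate point'' and defer: ruling out limit cycles that meet or cross the circles $\{w(r)=0\}$. This is not optional bookkeeping --- applying Theorem~\ref{A} component by component only controls cycles \emph{contained in} a single component, so without this step a cycle straddling a circle $\{r=r_j\}$ would escape the count entirely. The paper closes it with a one-line observation (Proposition~\ref{B}(i)) that is already implicit in your sign hypothesis: at points of $\{w=0\}$ the term multiplying $w$ in $M_k$ drops out, so there $M_k=\langle\nabla w,X\rangle=w'(r_j)\,R(r_j,\theta)$, and the hypothesis $M(r,\theta,k,w)\le\Phi_{k,w}(r)<0$ for $r>0$ forces $\dot w<0$ at \emph{every} point of each circle $r=r_j>0$; each circle is therefore a one-way membrane for the flow, and no periodic orbit can touch it. The degenerate root $r=0$ costs nothing because $P(0,0)=Q(0,0)=0$, so no periodic orbit passes through the origin. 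With these two repairs your proposal coincides with the paper's proof; your hyperbolicity argument is sound and is made precise in the paper by noting that along any cycle $\int_0^T\divv X\,dt=-\int_0^T M_k/(k\,w)\,dt\ne0$, since $M_k$ and $w$ each keep a fixed sign on the component containing the cycle.
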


Next remarks collect some comments on the above result.

\begin{rema}\label{re0} (i) The above theorem gives a
constructive way of testing when a Dulac function of a system of the form
$\dot r=rp(r^2), \,\dot\theta =\hat p(r^2)$
is also suitable for  system~\eqref{eq2}.

(ii) In Proposition~\ref{nueva} we will prove that there are many
polynomials $p$ for which a Dulac pair exists. Moreover, in
Section~\ref{se3}  we will show many systems of the form~\eqref{eq2}
for which the above theorem can be applied.

(iii) Remark \ref{opt} proves that the upper bound given in the
theorem can not be improved.
\end{rema}

\begin{rema}\label{re1}
(i) It is not difficult to see that
  any differential system fulfilling the
 hypotheses of Theorem \ref{MT} has at least $m^+-2-C$ limit cycles, where $C\ge0$
can be  computed as follows:
Let $0\le r_1<\cdots<r_{m^+}$ be the ordered non-negative roots of
$w(r).$ Define the rings $R_i=\{(x,y)\,:\,
r_{i}<\sqrt{x^2+y^2}<r_{i+1}\},$ $i=1,2,\ldots, m^+-1.$ Then
 $C$  is the number of rings among $R_1,R_2,\ldots,R_{m^+-1}$ containing critical points of system
\eqref{eq2}.

(ii) From the proof of the theorem it follows that there is at most
one limit cycle in each of the sets
$R_1,R_2,\ldots,R_{m^+-1},R_{m^+},$  where $R_{m^+}$ is the
unbounded ring $R_{m^+}=\{(x,y)\,:\, r_{m^+}<\sqrt{x^2+y^2}\}$
 and that (when $r_1>0$) there is no
limit cycle in the disc  $\{(x,y)\,:\, \sqrt{x^2+y^2}<r_{1}\}.$
Moreover in the rings $R_2,R_3,\ldots,R_{m^+-1}$ not containing
critical points of the system there exists always a limit cycle and
its stability is given by the sign of $w$ on them.
 The limit cycle on $R_1$ only can exist when $r_1>0$ or when
 $r_1=0$ and  the sign of $w$ on this set
does not coincide with the stability of the origin.
\end{rema}

Let us see how the theorem works in a concrete example. We will prove that the system
\begin{align}\label{mot}
  \dot x&=-y+4x-\frac{49}{10}x^3-\frac{26}{5}xy^2+\frac15x^2y^2+x^5+2x^3y^2+xy^4=P(x,y),\nonumber\\
  \dot y&=\phantom{-}x+4y-\frac{23}{5}x^2y-5y^3-\frac15xy^3-\frac2{15}y^4+x^4y+2x^2y^3+y^5=Q(x,y)
\end{align}
has exactly two limit cycles. Examples with parameters will be
studied in Section~\ref{se3}. Following the notation of the theorem
we get that $p(s)=4-79s/16+s^2.$ Moreover, by using Proposition
\ref{nueva} we take $w(r)=r^2p'(r^2)=r^2(2r^2-79/16).$ Finally we
choose   $k=4/5.$ We have
\[
p_{k,w}(r)=-\frac{79}{10}r^2 -\frac{1287}{128}r^4
+\frac{237}{40}r^6-\frac{8}{5}r^8.\]   By using Sturm's rule it is
easy to prove that $p_{k,w}(r)<0$ for all $r\ne0$ and so $(k,w(r))$
is a Dulac pair for $p.$ Straightforward computations give that
\begin{align*}
&M(r,\theta,k,w)= -\frac{79}{10}r^2+\frac1{2560}\left(-25740+\frac{16432}{5}\cos(2\theta)
+316\cos(4\theta)\right)r^4\\
&-\frac{79}{4800}\left(12\cos(\theta)+3\cos(3\theta)-15\cos(5\theta)+46\sin(\theta)-
7\sin(3\theta)-5\sin(5\theta)\right)r^5\\
&+\frac1{80}\left(474-\frac{128}{5}\cos(2\theta)-8\cos(4\theta)\right)r^6\\
&+\frac{1}{75}\left(6\cos(\theta)+9\cos(3\theta)-15\cos(5\theta)-2\sin(\theta)+
9\sin(3\theta)-5\sin(5\theta)\right)r^7-\frac85r^8.\\
\end{align*}
By using rough bounds like $$-46\le
6\cos(\theta)+9\cos(3\theta)-15\cos(5\theta)
-2\sin(\theta)+9\sin(3\theta)-5\sin(5\theta)\le 46$$ we get that
\[
M(r,\theta,k,w)\le \Phi_{k,w}(r)= -\frac{79}{10}r^2 -\frac{3459}{400}r^4 +\frac{869}{600}r^5
+\frac{1269}{200}r^6 +\frac{46}{75}r^7 -\frac{8}{5}r^8
\]
and it can be proved by using Sturm's rule that it is negative for
all $r>0.$ Therefore, since $w$ has two non-negative roots, $m^+=2$
and the system has at most two (hyperbolic) limit cycles.

\vspace{0.3cm}

It is clear that the origin is an unstable focus. By taking the
resultant of $P$ and $Q,$ for instance with respect to $y,$  and
applying Sturm's rule to this new polynomial we prove that the
origin is the unique critical point of the system. By studying the
flow of the system on the circles $\{x^2+y^2=R^2\}$ for $R$ big
enough, and on $\{x^2+y^2=r_0^2\},$ where $r_0$ is the positive root
of $w,$ and using the above information we deduce that the two limit
cycles actually exist. Indeed one of them is in
$\mathcal{D}=\{x^2+y^2<r_0^2\}$ and is stable and the other
surrounds $\mathcal D$ and is  unstable.

In fact, we have presented the study of system \eqref{mot} in this
introduction because it  shows at the same time the power and the
limitations of the method. By one hand, for a concrete system which
has no  a priori structure, we can give the exact number of limit
cycles. On the other hand, in all the cases that we have studied,
for polynomials systems of degree $2n$ or $2n+1$ it only has been
applicable for  systems having at most $n$ limit cycles and when all
these limit cycles are hyperbolic and nested.

Due to the extreme difficulty of the second part of the Hilbert's sixteenth problem
it is never easy to give explicit and realistic upper bounds for the number of limit
cycles of a planar polynomial system.
Moreover  most results in the literature  can
only be applied to control the limit cycles of  particular types of
differential systems, namely Li\'{e}nard systems, quadratic systems,
cubic systems, systems with homogeneous nonlinearities, etc, giving
 usually  upper bounds of 0, 1 or 2 limit cycles,
see for instance \cite{Cherkas0,Ye,Z}. The
criterion proved in Theorem~\ref{MT}  is not subject to these
restrictions.

The paper is organized as follows: Section 2 contains some
preliminary results and the proof of Theorem~\ref{MT}. In
Section~\ref{se3}, we will apply this result to different families
of differential systems providing in all of them  explicit upper
bounds for the number of limit cycles. In fact, we present two
examples with limit cycles surrounding a unique critical point (see
Examples~1 and 2), two situations with a limit cycle surrounding
several critical points (see Examples~3 and 4) and finally a family
of examples of differential systems showing that, in the set of
systems of the form~\eqref{eq2}, there are many open subsets of
systems satisfying all the hypotheses of the theorem.

\section{Preliminary results and proof of Theorem A}\label{se2}

Let  $\mathcal U\subset\mathbb{R}^2$ be an open set with smooth
boundary and such that its fundamental group, $\pi_1({\mathcal U}),$
is $\Zbb* \stackrel{\scriptsize\ell}{\cdots}*\Zbb$ or in other words
having $\ell$ gaps.  For short we will say that $\mathcal U$ is an
$\ell${\it-punctured}\, open set and we will denote by
$\ell({\mathcal U})$ its number of gaps. Notice that with this
notation, simply connected sets are 0-punctured sets and multiple
connected sets are $\ell$-punctured sets with $\ell\ge 1.$

The following result is a well known extension of the
Bendixson-Dulac Criterion to $\ell$-punctured sets. For a proof, see
any of the papers \cite{GGi,Lloyd, Yamato}.

\vspace{0.2cm}

\begin{teo}[Extended Bendixson-Dulac Criterion]\label{A} Consider the $\mathcal{C}^1$-differential system
\begin{equation}\label{equat}\Dot x=P(x,y),\,\quad\Dot
y= Q(x,y) \end{equation} and set $X=(P,Q).$ Let $\mathcal U$ be an
open $\ell$-punctured subset  of $ \mathbb{R}^2$ with smooth
boundary. Let $g\colon {\mathcal U}\rightarrow \mathbb{R}$  be a
$\mathcal{C}^1$-function  such that $$
    M:=\divv(gX)=
\frac{\partial g}{\partial x}P+\frac{\partial g}{\partial y}Q +(
\frac{\partial P}{\partial x}+ \frac{\partial Q}{\partial y} )g=
\langle \nabla g,X \rangle +g\divv(X) $$ does not change sign in $U$
and vanishes only on a null measure Lebesgue set  and such that
$\{M=0\} \cap\{g=0\}$ does not contain periodic orbits of
(\ref{equat}). Then the maximum number of periodic orbits of
(\ref{equat}) contained in $\mathcal U$ is $\ell.$ Furthermore each
one  is a hyperbolic limit cycle that does not cut
 $ \{g=0\}$ and its
stability  is given by the sign of $gM$ over it.
\end{teo}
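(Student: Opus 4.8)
The plan is to split the statement into two essentially independent parts: the upper bound $\ell$ on the number of periodic orbits contained in $\mathcal U$, and the qualitative information about each such orbit (hyperbolicity, disjointness from $\{g=0\}$, and the stability rule). The single computational engine for both parts is the divergence theorem applied to the field $gX$. The elementary remark driving everything is that $X$ is tangent to any periodic orbit, so for every compact region $R\subset\mathcal U$ whose boundary $\partial R$ is a finite union of periodic orbits of \eqref{equat} one has
\[
\int_R M\,dA=\int_R\divv(gX)\,dA=\oint_{\partial R}\langle gX,\nu\rangle\,ds=0,
\]
with $\nu$ the outer normal, since $\langle X,\nu\rangle\equiv0$ along each boundary orbit. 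As $M$ does not change sign in $\mathcal U$ and vanishes only on a null set, the left-hand side is strictly positive or strictly negative as soon as $R$ has positive area. Hence no such region can be entirely contained in $\mathcal U$: every $R$ of this type must meet a gap of $\mathcal U$.

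For the counting I would first organise the periodic orbits in $\mathcal U$. They are pairwise disjoint Jordan curves, so ``lies in the interior of'' is a partial order whose diagram is a forest. To each orbit $\gamma_i$ I associate its immediate interior region $R_i$, the bounded set cut out by $\gamma_i$ on the outside and by the maximal periodic orbits strictly inside $\gamma_i$ on the inside; then $\partial R_i$ is a union of periodic orbits, so by the display above $R_i$ contains at least one gap. A short check shows the $R_i$ are pairwise disjoint (the interior of a child orbit is removed from the region of its parent), so the gaps they capture are distinct, giving at most $\ell$ orbits. The same argument rules out a period annulus (a continuum of nested orbits would demand infinitely many distinct gaps), so all periodic orbits are isolated.

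For the qualitative part, fix a periodic orbit $\gamma$ of period $T$ and write $u(t)=g(\phi_t(p))$. Along $\gamma$ the identity reads $M=\dot u+u\,\divv X$, because $\langle\nabla g,X\rangle$ is the derivative of $g$ along the flow. With the positive integrating factor $\mu(t)=\exp\!\big(\int_0^t\divv X\,ds\big)$ one gets $\frac{d}{dt}(\mu u)=\mu M$, hence $(e^{\lambda}-1)\,g(p)=\int_0^T\mu M\,dt$, where $\lambda=\int_0^T\divv X\,dt$ is the characteristic exponent. Since $\mu>0$ and $M$ has constant sign, the right-hand side is nonzero as soon as $M\not\equiv0$ on $\gamma$; this forces $\lambda\neq0$ and $g(p)\neq0$ for every base point, so $\gamma$ is hyperbolic and avoids $\{g=0\}$. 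Dividing $M=\dot u+u\,\divv X$ by $u=g$ and using $\int_0^T\dot u/u\,dt=0$ by periodicity yields $\lambda=\int_0^T gM/g^2\,dt$; as $g^2>0$ and $gM$ has constant sign on $\gamma$, the sign of $\lambda$, and therefore the stability of $\gamma$, is exactly the sign of $gM$ along it.

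The genuinely delicate case, which I expect to be the main obstacle, is a periodic orbit $\gamma$ lying entirely in $\{M=0\}$: there $\int_0^T\mu M\,dt=0$ and the dichotomy above says nothing about $\lambda$. This is where hypothesis on $\{M=0\}\cap\{g=0\}$ must be used: if $g$ also vanished somewhere on such a $\gamma$, then the linear equation $\dot u=-(\divv X)u$ (valid because $M\equiv0$ on $\gamma$) together with uniqueness would give $g\equiv0$ on $\gamma$, i.e. $\gamma\subset\{M=0\}\cap\{g=0\}$, which is excluded; so at least the non-crossing of $\{g=0\}$ survives, while hyperbolicity requires ruling out this degenerate orbit under the standing constant-sign and null-measure assumptions (in every application of Theorem~\ref{MT} one has $M<0$ strictly, so $\{M=0\}$ is empty and the difficulty evaporates). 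Closing this degenerate case and making the forest bookkeeping of the counting step fully rigorous is where essentially all the work lies.
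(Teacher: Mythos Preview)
The paper does not actually prove Theorem~\ref{A}; it states the result and refers to \cite{GGi,Lloyd,Yamato} for a proof. So there is no ``paper's own proof'' to compare against. Your argument is essentially the standard one found in those references: apply the divergence theorem to $gX$ over regions bounded by periodic orbits to force each such region to contain a hole, pigeonhole the regions against the $\ell$ holes via the nesting/forest structure, and read off hyperbolicity and the stability rule from the ODE $\dot u + u\,\divv X = M$ for $u=g$ along the orbit. Both the counting step and the qualitative step are handled correctly; in particular your computation $(e^{\lambda}-1)g(p)=\int_0^T\mu M\,dt$ and the consequence $\lambda=\int_0^T gM/g^2\,dt$ are the right identities.

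Your honest flag on the degenerate case $M\equiv 0$ along a periodic orbit is well taken and is, in fact, a genuine soft spot of the statement as written. If $\gamma\subset\{M=0\}$ but $g\not\equiv 0$ on $\gamma$, the hypothesis ``$\{M=0\}\cap\{g=0\}$ contains no periodic orbits'' is not violated, yet your identity gives $\lambda=\int_0^T M/g\,dt=0$, so hyperbolicity fails. The cited references typically assume either that $\{M=0\}$ itself contains no periodic orbit, or that $M$ vanishes only on a set that cannot carry closed orbits; the paper's phrasing is slightly weaker. For the purposes of the present paper this is harmless, since in every application (Proposition~\ref{B} and Theorem~\ref{MT}) one has $M_k<0$ strictly and the degenerate case is vacuous---exactly as you note. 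So: your proof is correct under the customary hypothesis, and your caveat about the edge case is accurate rather than a gap in your reasoning.
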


We will prove and use the following corollary of the above theorem.

\begin{prop}\label{B} Consider the $\mathcal{C}^1$-differential system
\begin{equation}\label{bis} \Dot x=P(x,y),\,\quad\Dot
y= Q(x,y) \end{equation} and set $X=(P,Q).$ Assume that there exist
a positive real number $k$ and a polynomial $f(x,y)$ such that
\begin{equation}\label{eme}
    M_k:=  \frac{\partial f}{\partial x}P+\frac{\partial f}{\partial y}Q
  -k( \frac{\partial P}{\partial x}+ \frac{\partial Q}{\partial y} )f=
  \langle \nabla f,X \rangle -kf\divv(X)
\end{equation} does not vanish in an open region with regular
boundary ${\mathcal W}\subset \Rbb^2.$  Let ${\mathcal U}_1,$
${\mathcal U}_2$,$\ldots,$ ${\mathcal U}_m,$  be the connected
components of ${\mathcal W}\setminus\{f=0\}.$ Then
\begin{enumerate}[(i)]

\item The periodic orbits of system \eqref{bis} contained in $\mathcal W$
never cut the set $\{f=0\}.$

\item The maximum number of limit cycles of system~\eqref{bis}
contained in each ${\mathcal U}_j,$ $j=1,2,\ldots, m,$  is
$\ell({\mathcal U}_j),$  and all of them are hyperbolic.
Moreover their stability is given by the sign of $-fM_k$ on each
region.

\item The maximum number of limit cycles of system \eqref{bis} in $\mathcal
W$ is $\ell({\mathcal U}_1)+\ell({\mathcal
U}_2)+\cdots+\ell({\mathcal U}_m)$ and all them are hyperbolic.
\end{enumerate}
\end{prop}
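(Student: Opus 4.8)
The plan is to deduce Proposition \ref{B} from the Extended Bendixson-Dulac Criterion (Theorem \ref{A}) by choosing the Dulac function $g$ appropriately in terms of the given $f$ and $k$. The natural candidate is $g:=|f|^{-1/k}$, which is the function already announced in the introduction (in the form $|w(r)|^{-1/k}$). First I would fix one connected component $\mathcal U_j$ of $\mathcal W\setminus\{f=0\}$; on such a set $f$ has a constant sign and does not vanish, so $g=|f|^{-1/k}$ is a well-defined $\mathcal C^1$ function there. The key algebraic step is to compute $\divv(gX)$ and relate it to $M_k$ from \eqref{eme}.

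The main computation is as follows. On a component where $f\neq 0$ one has $\nabla g = -\frac1k |f|^{-1/k-1}\sgn(f)\,\nabla f$, so that
\begin{align*}
\divv(gX)&=\langle \nabla g,X\rangle + g\,\divv(X)\\
&=|f|^{-1/k-1}\left(-\frac1k\sgn(f)\langle\nabla f,X\rangle + |f|\,\divv(X)\right)\\
&=-\frac{\sgn(f)}{k}|f|^{-1/k-1}\Big(\langle\nabla f,X\rangle - k\,\sgn(f)|f|\,\divv(X)\Big)\\
&=-\frac{\sgn(f)}{k}|f|^{-1/k-1}\,M_k,
\end{align*}
using $\sgn(f)|f|=f$ in the last line. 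Thus $\divv(gX)$ equals $M_k$ up to multiplication by the nowhere-vanishing positive factor $\frac1k|f|^{-1/k-1}$ and the sign factor $-\sgn(f)$. Since $f$ has constant sign on $\mathcal U_j$ and $M_k$ does not vanish on $\mathcal W\supseteq\mathcal U_j$ by hypothesis, it follows that $\divv(gX)$ does not change sign on $\mathcal U_j$ and vanishes only where $M_k$ does, hence on a null-measure set. Moreover $\{g=0\}=\varnothing$ on $\mathcal U_j$, so the condition in Theorem \ref{A} about $\{M=0\}\cap\{g=0\}$ is vacuous. Applying Theorem \ref{A} on $\mathcal U_j$ (whose number of gaps is $\ell(\mathcal U_j)$) immediately yields part (ii): at most $\ell(\mathcal U_j)$ limit cycles in $\mathcal U_j$, all hyperbolic, none meeting $\{g=0\}=\{f=0\}$; the stability is governed by the sign of $g\,\divv(gX)$, which by the displayed identity has the same sign as $-f M_k$ (the positive factors and the $\sgn(f)^2=1$ cancelling appropriately).

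Part (i) follows because a periodic orbit contained in $\mathcal W$ cannot cross $\{f=0\}$: on $\{f=0\}$ one has $M_k=\langle\nabla f,X\rangle$, and if an orbit touched this set the Dulac argument on the adjacent component would be contradicted; more directly, Theorem \ref{A} guarantees the limit cycles in each component do not cut $\{g=0\}$, and a periodic orbit of \eqref{bis} lying in $\mathcal W$ must lie in the closure of the components, so it is confined to a single $\mathcal U_j$. Part (iii) is then just the additivity of the count: every limit cycle in $\mathcal W$ lies in exactly one $\mathcal U_j$ by part (i), so summing the per-component bounds from part (ii) gives the total bound $\sum_{j=1}^m \ell(\mathcal U_j)$, with hyperbolicity inherited component by component.

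The step I expect to require the most care is the sign bookkeeping in the displayed identity and, relatedly, the topological claim in part (i) that a periodic orbit cannot straddle the zero set $\{f=0\}$. The computation itself is routine once $g=|f|^{-1/k}$ is chosen, but one must argue cleanly that orbits stay within a single component—this is where the hypothesis that $M_k$ does not vanish on all of $\mathcal W$ (not merely on each $\mathcal U_j$ separately) is essential, since it controls the behaviour across the separating set $\{f=0\}$ and prevents tangencies that would allow an orbit to pass from one component to another.
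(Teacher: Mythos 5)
Your treatment of items (ii) and (iii) is essentially identical to the paper's: the paper also takes $g=|f|^{-1/k}$ on each connected component of ${\mathcal W}\setminus\{f=0\}$, computes $\divv(gX)=\frac{-\sgn(f)}{k}|f|^{-1/k-1}M_k$, applies Theorem \ref{A} componentwise, and reads off the stability from the sign of $-fM_k$; your sign bookkeeping is correct. (The paper additionally verifies hyperbolicity by hand, writing $\divv X=-\frac{M_k}{kf}+\frac{\langle\nabla f,X\rangle}{kf}$ along a limit cycle and noting that the second term is $\frac1k\frac{d}{dt}\ln|f|$ and so integrates to zero over a period, whence $\int_0^T\divv X\,dt=-\int_0^T\frac{M_k}{kf}\,dt\ne0$; since Theorem \ref{A} as stated already asserts hyperbolicity, your shortcut of citing it is acceptable.)

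The genuine gap is in your item (i), which is the logical prerequisite for summing the bounds in (iii). Your ``more direct'' argument is circular: Theorem \ref{A} applied on ${\mathcal U}_j$ only concerns periodic orbits \emph{already contained} in ${\mathcal U}_j$, so it says nothing about a periodic orbit contained in $\mathcal W$ that meets $\{f=0\}$, and the observation that such an orbit ``lies in the closure of the components'' is a non sequitur — every subset of $\mathcal W$ does, without being confined to a single component. Your closing heuristic is also backwards: nonvanishing of $M_k$ on $\{f=0\}\cap{\mathcal W}$ does not \emph{prevent} orbits from passing between components; on the contrary, it forces every crossing to be transversal, and ordinary orbits do cross. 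The paper's (correct) argument is: on $\{f=0\}$ the identity \eqref{eme} reduces to $M_k=\langle\nabla f,X\rangle=\dot f$, so $\dot f\ne0$ at every point of $\{f=0\}\cap{\mathcal W}$; moreover, since $\mathcal W$ is a region and $M_k$ never vanishes there, $M_k$ has one fixed sign, so \emph{all} crossings of $\{f=0\}$ by orbits in $\mathcal W$ occur in the same direction (say from $\{f<0\}$ to $\{f>0\}$). If a periodic orbit $\gamma\subset{\mathcal W}$ met $\{f=0\}$, then $t\mapsto f(\gamma(t))$ would be a periodic function all of whose zeros carry a derivative of the same nonzero sign, which is impossible (after an up-crossing the function must return through zero downwards). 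Note that transversality alone would not suffice — a periodic function may have only simple zeros, with alternating crossing directions — so it is precisely the constant sign of $M_k$ along $\{f=0\}\cap{\mathcal W}$ that proves (i), and this is the ingredient your write-up gestures at but never actually deploys.
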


\begin{proof} Since  $M_k$ does not vanish on $\mathcal W$ we know
that
 $ \left.\langle \nabla f,X \rangle\right|_{\{f=0\}\cap{\mathcal W}} $ does not
vanish. Therefore  the periodic orbits of \eqref{bis} which are
totally contained in $\mathcal W$ never  cut the set $\{f=0\}$ and
(i) follows. To prove items (ii) and (iii) it suffices to apply
Theorem \ref{A} to each one of the connected components ${\mathcal
W}\setminus \{f=0\}.$ Note that, on each of them, the function
$g=|f|^{-1/k}$ is smooth and moreover
$$
     \divv(gX)=
    \frac{-\mbox{sign}(f)}{k}|f|^{-1/k-1}\left[\langle \nabla f,X \rangle -k f
     \divv (X)\right]= \frac{-\mbox{sign}(f)}{k}|f|^{-1/k-1} M_k.
$$
Therefore the upper bound for the number of limit cycles follows.

Finally we prove the hyperbolicity of all the   limit cycles. Let
$\gamma=\{(x(t),y(t))\,:\, t\in[0,T]\}$ be one of them. We must
prove that $\int_0^T\divv X(x(t),y(t))\,dt\ne0,$ see for instance
\cite[Thm. 1.23]{DLA}. Since $\gamma$ does not intersect the set
$\{f=0\}$ we have that over $\gamma$
$$ \divv X=-\frac{M_k}{k f}+\frac{\langle \nabla f,X \rangle}{k f}.
$$
Hence
\[
\int_0^T\divv X(x(t),y(t))\,dt=-\int_0^T \frac{M_k(x(t),y(t))}{k f(x(t),y(t))} \,dt\ne0
\]
and its stability is given by the sign of $-fM_k$ on the region
where $\gamma$ lies. Therefore the result follows.
\end{proof}

To apply the above result, it will be useful to get the expression
of the function $M_k$ given in the Proposition~\ref{B} in terms of
the components of the differential system~\eqref{bis}, written in
polar coordinates.

\begin{lem}\label{le2} Let $\dot r=R(r,\theta), \, \dot
\theta=\Theta(r,\theta)$ be the expression of  system~\eqref{bis} in
polar coordinates. Then the function \eqref{eme} writes as

\begin{align}
  M_k&=  \frac{\partial f}{\partial x}P+\frac{\partial f}{\partial y}Q
  -k( \frac{\partial P}{\partial x}+ \frac{\partial Q}{\partial y} )f\nonumber\\
&=\frac{\partial f}{\partial r}R+\frac{\partial f}{\partial \theta}\Theta
-k\left(\frac{\partial R}{\partial r}
+\frac{\partial \Theta}{\partial \theta}+\frac{R}{r}\right)f.\nonumber
\end{align}
\end{lem}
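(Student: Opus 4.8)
The plan is to split $M_k$ into its two natural pieces and treat them separately. Writing
$$M_k = \langle \nabla f, X\rangle - k f \divv(X),$$
the identity to be proved reduces to the two coordinate-change statements
$$\frac{\partial f}{\partial x}P + \frac{\partial f}{\partial y}Q = \frac{\partial f}{\partial r}R + \frac{\partial f}{\partial\theta}\Theta \qquad\text{and}\qquad \frac{\partial P}{\partial x} + \frac{\partial Q}{\partial y} = \frac{\partial R}{\partial r} + \frac{\partial\Theta}{\partial\theta} + \frac{R}{r}.$$
Note that $f$ enters the divergence term only as a multiplicative scalar factor, and a scalar function takes the same value at a point irrespective of the coordinates used to describe that point; hence it suffices to establish the second identity for $\divv(X)$ alone, after which the factor $k f$ attaches to both sides with no further work.

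For the first (advection) identity, the cleanest argument is to recognise both sides as the derivative of $f$ along the flow. Indeed, by the chain rule $\frac{d}{dt} f(x(t),y(t)) = \frac{\partial f}{\partial x}\dot x + \frac{\partial f}{\partial y}\dot y = \frac{\partial f}{\partial x}P + \frac{\partial f}{\partial y}Q$, while differentiating the same quantity through the polar variables gives $\frac{\partial f}{\partial r}\dot r + \frac{\partial f}{\partial\theta}\dot\theta = \frac{\partial f}{\partial r}R + \frac{\partial f}{\partial\theta}\Theta$. Since $\frac{df}{dt}$ is intrinsic, the two expressions must agree. Equivalently, one can substitute $\partial r/\partial x = \cos\theta$, $\partial\theta/\partial x = -\sin\theta/r$ and the analogous $y$-derivatives directly.

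For the divergence identity I would argue by direct computation. First invert the defining relations $R = P\cos\theta + Q\sin\theta$ and $r\Theta = Q\cos\theta - P\sin\theta$ to obtain $P = R\cos\theta - r\Theta\sin\theta$ and $Q = R\sin\theta + r\Theta\cos\theta$, now viewing $R$ and $\Theta$ as functions of $(r,\theta)$. Then apply the polar chain-rule operators $\partial_x = \cos\theta\,\partial_r - \frac{\sin\theta}{r}\partial_\theta$ and $\partial_y = \sin\theta\,\partial_r + \frac{\cos\theta}{r}\partial_\theta$ to $P$ and $Q$ respectively and add. The terms proportional to $\Theta\sin\theta\cos\theta$ cancel within each of $\partial_x P$ and $\partial_y Q$, and upon summing, the cross terms carrying $r\,\partial_r\Theta\,\sin\theta\cos\theta$ and $\frac1r\,\partial_\theta R\,\sin\theta\cos\theta$ cancel against their opposites; the surviving terms collapse via $\cos^2\theta + \sin^2\theta = 1$ to exactly $\partial_r R + \partial_\theta\Theta + R/r$.

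The only real obstacle is the bookkeeping in this last step: one must expand $\partial_r P$, $\partial_\theta P$, $\partial_r Q$, $\partial_\theta Q$ by the product rule and track roughly a dozen terms before the cancellations become visible. This is purely mechanical, and in fact the resulting formula is the classical expression for the divergence of a planar vector field in polar coordinates, with radial and angular velocity components $R$ and $r\Theta$, so it may simply be quoted if a fully self-contained derivation is not wanted.
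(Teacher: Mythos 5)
Your proof is correct and complete. There is nothing in the paper to compare it against: the authors state Lemma~\ref{le2} without any proof, treating it as a routine change of variables, so your computation supplies exactly the verification they suppress, and it is the standard one. Both halves check out. The advection identity is indeed immediate once both sides are recognised as the intrinsic derivative $\frac{d}{dt}f$ along the flow, computed in Cartesian and in polar variables. For the divergence identity, your bookkeeping is accurate: in $\partial_x P$ and $\partial_y Q$ the terms $\mp\Theta\sin\theta\cos\theta$ cancel internally, and after summing, the cross terms $\mp r\,\partial_r\Theta\,\sin\theta\cos\theta$ and $\mp\frac1r\,\partial_\theta R\,\sin\theta\cos\theta$ cancel in pairs, leaving $\frac{\partial R}{\partial r}+\frac{\partial\Theta}{\partial\theta}+\frac{R}{r}$ via $\cos^2\theta+\sin^2\theta=1$; equivalently, as you note, this is the classical polar divergence $\divv X=\frac1r\frac{\partial(rR)}{\partial r}+\frac1r\frac{\partial(r\Theta)}{\partial\theta}$ for the physical components $(R,\,r\Theta)$, which could simply be quoted. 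Your observation that the factor $kf$ attaches harmlessly because $f$ is a scalar is also right. The only implicit caveat, consistent with how the paper uses the lemma, is that the identity is asserted on $r>0$, where the polar change of coordinates is regular.
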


\begin{lem}\label{le1} Let $p(s)$ be a real polynomial having all its roots
real and simple.

(i) For all $s\in \mathbb{R}$
\[
p(s)p''(s)-(p'(s))^2<0.
\]

(ii) For each $k\in \mathbb{R}$ define the new polynomial
\[
q_k(r):=2 r^4\left(p(r^2)p''(r^2)-k(p'(r^2))^2\right)+ 2(1-k)r^2p(r^2)p'(r^2).
\]
Then there exists $k,$ with $|k-1|$ small enough,  such that
$q_{k}(r)<0$ for all $r\ne0.$

\end{lem}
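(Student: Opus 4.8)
The plan is to prove the two parts separately, deriving (ii) from (i) by reducing the statement about $q_k$ to a one-variable inequality on $(0,\infty)$.

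For part (i), I would use the logarithmic derivative of $p$. Writing $p(s)=c\prod_{i=1}^m(s-s_i)$ with the $s_i$ distinct, away from the roots we have $p'/p=\sum_i(s-s_i)^{-1}$, so
\[
\frac{p(s)p''(s)-(p'(s))^2}{p(s)^2}=\Bigl(\frac{p'}{p}\Bigr)'(s)=-\sum_{i=1}^m\frac{1}{(s-s_i)^2}<0,
\]
which gives the inequality wherever $p(s)\neq0$. At a root $s=s_j$ the numerator equals $-(p'(s_j))^2$, strictly negative because simplicity forces $p'(s_j)\neq0$. Since $p(s)p''(s)-(p'(s))^2$ is a polynomial, this covers every $s\in\mathbb{R}$ and (i) follows.

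For part (ii), I would first substitute $s=r^2$ and factor: since $r^4=s^2$ and $r^2=s$, one gets $q_k(r)=2s\,\Psi_k(s)$ with
\[
\Psi_k(s):=s\bigl(p(s)p''(s)-k(p'(s))^2\bigr)+(1-k)p(s)p'(s).
\]
As $s=r^2$ ranges over $(0,\infty)$ when $r\neq0$, the claim $q_k(r)<0$ is equivalent to $\Psi_k(s)<0$ for all $s>0$. Setting $h(s):=p(s)p''(s)-(p'(s))^2$ and $G(s):=s(p'(s))^2+p(s)p'(s)=p'(s)(sp(s))'$, a short computation gives $\Psi_k=s\,h-(k-1)G$. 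By part (i), $h(s)<0$ for every real $s$; hence at $k=1$ we have $\Psi_1(s)=s\,h(s)<0$ for all $s>0$, while $\Psi_1(0)=0$. The task is to show this strict negativity survives a small change of $k$.

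Because $s\,h(s)<0$ on $(0,\infty)$, dividing shows that $\Psi_k(s)<0$ is equivalent to $(k-1)\phi(s)<1$, where $\phi(s):=G(s)/(s\,h(s))$, and I would then control $\phi$ at the two ends of $(0,\infty)$. As $s\to\infty$, comparing leading terms (with $m=\deg p$) gives $\phi(s)\to-(m+1)$, so $\phi$ is bounded on every interval $[\delta,\infty)$. As $s\to0^+$, since $h(0)\neq0$ the denominator behaves like $h(0)s$; if $G(0)=p(0)p'(0)=0$ then $\phi$ extends continuously and is bounded on all of $(0,\infty)$, so any sufficiently small $|k-1|$ works. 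The delicate case is $G(0)\neq0$, where $\phi(s)\to\pm\infty$ as $s\to0^+$ with sign $-\sgn(G(0))$ (recall $h(0)<0$); there I would choose $k$ with $\sgn(k-1)=\sgn(G(0))$, forcing $(k-1)\phi(s)<0<1$ on a right-neighborhood of $0$, and then take $|k-1|$ small enough to handle the bounded part $[\delta,\infty)$. This near-origin analysis, where $\Psi_1$ touches zero and the sign of $k-1$ must be chosen accordingly, is the main obstacle; away from $0$ the argument is a routine continuity and boundedness estimate.
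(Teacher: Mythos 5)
Your proposal is correct and takes essentially the same route as the paper: part (i) is the paper's logarithmic-derivative computation, and in part (ii) your splitting $\Psi_k(s)=s\,h(s)-(k-1)G(s)$ is precisely the paper's decomposition $q_k=q_1+(1-k)\widetilde{q}$ after the substitution $s=r^2$, with your sign choice $\sgn(k-1)=\sgn(G(0))$ identical to the paper's condition $c_0c_1(1-k)<0$ (since $G(0)=p(0)p'(0)=c_0c_1$ for monic $p$), and your limits of $\phi$ as $s\to0^+$ and $s\to\infty$ playing the role of the paper's inspection of the lowest- and highest-order coefficients of $q_k$. If anything, your write-up is more explicit at two points the paper leaves implicit, namely the value of $p\,p''-(p')^2$ at the zeros of $p$ in (i) and the boundedness/compactness step on $[\delta,\infty)$ in (ii).
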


\begin{proof} It is not restrictive to write $p(s)=\prod_{j=1}^l
(s-s_j)$ with $s_i\ne s_j$ for $i\ne j.$ From the equalities

\begin{eqnarray*}
p(s)p''(s)-(p'(s))^2&=&p^2(s)\left(\frac{p'(s)}{p(s)} \right)'=p^2(s)\left(
\frac{\sum_{i=1}^l\left(\prod_{j=1,\,j\ne i}^l(s-s_j)  \right)}
{\prod_{j=1}^l (s-s_j)  }
\right)'\\
&=&p^2(s)\left(
\sum_{i=1}^l \frac 1{s-s_i}
\right)'=-p^2(s)\left(
\sum_{i=1}^l \frac 1{(s-s_i)^2}
\right)<0
\end{eqnarray*}
item (i) follows.

To prove (ii) notice that by item (i)
\[q_1(r)=2
r^4\left(p(r^2)p''(r^2)-(p'(r^2))^2\right)<0\quad\mbox{for all}\quad
r\ne0.\] Moreover $q_k(r)=q_1(r)+(1-k)\widetilde{q}(r)$ for some polynomial $\widetilde{q}$ of degree $4l$ and
\begin{eqnarray*}
q_k(r)&=&2c_0c_1(1-k)r^2+2
\left(c_1^2+4c_0c_2-2k(c_1^2+c_0c_2)\right)r^4\\
&&+\cdots+2\left((1-k)l^2-kl\right)r^{4l},
\end{eqnarray*} where
$p(s)=c_0+c_1s+c_2s^2+\cdots+s^l.$ Note that
$p(0)p''(0)-(p'(0))^2=2c_0c_2-c_1^2<0$ and thus,  for $|k-1|$ small
enough, $(1-k)l^2-kl$ and $c_1^2+4c_0c_2-2k(c_1^2+c_0c_2)$ are both
negative. Hence when $c_0c_1=0,$ taking $|k-1|$ small enough,
 we can always ensure that the
sign of $q_k(r)$ is negative in $\mathbb{R}\setminus\{0\}.$ When $c_0c_1\ne0,$ in addition we have  to take
$k$  such that $c_0c_1(1-k)<0.$
\end{proof}

\begin{prop}\label{nueva}
Let $p(s)$ be a real polynomial. Then
\begin{enumerate}[(i)]
\item If $p(s)$ has all its roots real and simple by taking $w(r)=r^2p'(r^2)$ and some $k$ near 1
then $(k,w(r))$ is a Dulac pair of $p(s).$
\item If $p(s)$ has some real multiple positive root  then it has no Dulac pair.
\item  There are many polynomials $p(s)$ with complex roots also having  Dulac pairs.
\end{enumerate}

\end{prop}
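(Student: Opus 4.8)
The plan is to prove the three parts of Proposition~\ref{nueva} separately, since they are essentially independent and each calls for a different technique.

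For part (i), I would compute directly. With the choice $w(r)=r^2p'(r^2)$ we have $w'(r)=2rp'(r^2)+2r^3p''(r^2)$, and substituting into the definition of the Dulac pair, the quantity $p_{k,w}(r)$ becomes a polynomial in $r$ that, after a short calculation, should be expressible in terms of the combination appearing in Lemma~\ref{le1}(ii). Indeed the whole point of introducing $q_k(r)$ in that lemma is that it captures exactly the sign condition we need here. The computation plan is: expand $rp(r^2)w'(r)=rp(r^2)\bigl(2rp'(r^2)+2r^3p''(r^2)\bigr)$ and $2k\bigl(p(r^2)+r^2p'(r^2)\bigr)w(r)=2k\bigl(p(r^2)+r^2p'(r^2)\bigr)r^2p'(r^2)$, subtract, and check that the result is a nonzero constant multiple of $q_k(r)$ (most likely $p_{k,w}(r)=q_k(r)$ up to sign or a factor). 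Once this algebraic identity is verified, Lemma~\ref{le1}(ii) immediately supplies a $k$ near $1$ making $q_k(r)<0$ for all $r\neq 0$, hence $p_{k,w}(r)<0$ for all $r>0$, which is precisely the Dulac pair condition. I expect the only delicate point to be keeping track of the factors of $2$ and the signs; there is no real obstacle.

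For part (ii), I would argue by contradiction using a local obstruction at the multiple positive root. Suppose $s_0=r_0^2>0$ is a real root of $p$ of multiplicity $\ge 2$, and suppose toward a contradiction that some Dulac pair $(k,w(r))$ exists, so $p_{k,w}(r)<0$ for all $r>0$. At $r=r_0$ the factor $p(r_0^2)$ vanishes, and since the multiplicity is at least two, $p'(r_0^2)$ vanishes as well. Looking at the definition,
\[
p_{k,w}(r)=rp(r^2)w'(r)-2k\bigl(p(r^2)+r^2p'(r^2)\bigr)w(r),
\]
both terms contain a factor that vanishes at $r=r_0$ (the first has $p(r^2)$, the second has $p(r^2)+r^2p'(r^2)$, and at $r_0$ both $p$ and $p'$ vanish). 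Hence $p_{k,w}(r_0)=0$, contradicting the strict inequality $p_{k,w}(r)<0$ for all $r>0$. I would double-check that the assumption $r_0>0$ is genuinely used so that $r=r_0$ lies in the open interval $(0,\infty)$ where strict negativity is required; this is where the positivity hypothesis on the multiple root enters. The main thing to verify carefully is that the multiplicity being $\ge 2$ forces \emph{both} $p$ and $p'$ to vanish at $s_0$, which is immediate, and that consequently every summand of $p_{k,w}$ vanishes at $r_0$ regardless of the choice of $k$ and $w$. This is the cleanest of the three and should present no serious obstacle.

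Part (iii) is the one I expect to be the real work, since it is an existence statement ``there are many polynomials with complex roots also having Dulac pairs,'' and one must exhibit a robust family rather than a single lucky example. My plan is to produce an open family by a perturbation argument anchored at the situation of part (i). Start from a polynomial $p_0$ with all real simple roots, for which part (i) gives a Dulac pair $(k,w)$ with $p_{k,w}(r)<0$ strictly on $(0,\infty)$; strictness on the closed half-line (together with the controlled growth at $0$ and at $\infty$ coming from the explicit lowest- and highest-degree coefficients, as in Lemma~\ref{le1}(ii)) should make the condition \emph{stable} under small perturbations of the coefficients of $p$. Concretely, I would fix $k$ and let $w(r)=r^2 p'(r^2)$ vary continuously with $p$; then $p_{k,w}(r)$ depends continuously on the coefficients of $p$, and its negativity is an open condition, so all $p$ in a neighborhood of $p_0$ retain a Dulac pair. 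Since an arbitrarily small perturbation of a polynomial with real simple roots can create conjugate complex roots (for instance by merging two nearby real roots into a complex pair), this neighborhood contains many polynomials with complex roots, giving the ``many'' in the statement. The hard part here is making ``stable'' precise at the two endpoints $r\to 0^+$ and $r\to\infty$, where $p_{k,w}$ tends to $0$; one must ensure the sign of the lowest-order and highest-order nonzero coefficients is strictly preserved under perturbation (which follows from the explicit coefficient expressions already derived in Lemma~\ref{le1}(ii) for the endpoint behaviour), so that negativity does not fail near the ends of the interval. Once the endpoint behaviour is pinned down by those leading coefficients and negativity on compact subintervals follows from continuity, the openness argument closes and part (iii) follows.
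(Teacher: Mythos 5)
Your parts (i) and (ii) are correct and coincide with the paper's own proof: in (i) the identity $p_{k,w}(r)=q_k(r)$ for $w(r)=r^2p'(r^2)$ is exactly how the paper reduces the claim to Lemma~\ref{le1}(ii), and in (ii) the observation that multiplicity $\ge 2$ at $s^*>0$ forces $p(s^*)=p'(s^*)=0$, hence $p_{k,w}(\sqrt{s^*})=0$ for \emph{every} couple $(k,w)$, is precisely the paper's (tersely stated) argument.

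Part (iii), however, has a genuine gap. Your plan is to fix $k$, let $w(r)=r^2p'(r^2)$ vary with $p$, argue that strict negativity of $p_{k,w}$ on $(0,\infty)$ is open in the coefficients of $p$, and then claim that ``an arbitrarily small perturbation of a polynomial with real simple roots can create conjugate complex roots.'' That last claim is false as stated: simple real roots are stable under small perturbations, so complex roots can only be created by colliding two real roots, i.e.\ by crossing the multiple-root locus --- and exactly there your own part (ii) says the Dulac property fails when the collision happens at a positive abscissa. The failure moreover persists on the complex side of the collision. Write $p_t(s)=q(s)\bigl((s-m)^2+t\bigr)$ with $m>0$ and $q(m)\ne0$; then $p_t(m)=q(m)t$, $p_t'(m)=q'(m)t$, $p_t''(m)=q''(m)t+2q(m)$, so with your choice $w_t(r)=r^2p_t'(r^2)$ the identity of part (i) gives
\[
p_{k,w_t}(\sqrt{m}\,)=2m^2\bigl(p_t(m)p_t''(m)-k\,p_t'(m)^2\bigr)+2(1-k)\,m\,p_t(m)p_t'(m)
=4m^2q(m)^2\,t+O(t^2)>0
\]
for all small $t>0$ and any fixed $k$. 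Thus the moment the two real roots become a complex pair at a positive location, the Dulac inequality is violated near $r=\sqrt{m}$, and no bookkeeping of the leading and lowest coefficients at $r\to0^+$ and $r\to\infty$ can repair this: the obstruction sits in the interior of $(0,\infty)$ and its margin degenerates (like $\delta^2$, the same order as the perturbation needed to complexify the roots) as the two roots approach. Your argument could be salvaged by colliding roots only at \emph{negative} values of $s$ --- the Dulac condition only sees $s=r^2>0$, and the contribution $\frac{2t-2(s-m)^2}{((s-m)^2+t)^2}$ of the complex pair to $(p'/p)'$ remains uniformly negative on $s\ge0$ when $m<0$ --- but your sketch imposes no such restriction, and without it the openness-plus-merging argument proves nothing. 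For comparison, the paper avoids the perturbation of $p$ altogether: it constructs $p=p_1p_2$ with $p_2(s)=\prod_i(s-r_i^2)$ carrying the positive simple roots and $p_1(r^2)+r^2p_1'(r^2)>0$ (which permits $p_1$ to have complex roots), takes $k=1$ and $w(r)=p_2(r^2)$, obtaining the semidefinite $p_{1,w}(r)=-2p_2^2(r^2)\bigl(p_1(r^2)+r^2p_1'(r^2)\bigr)\le0$, and then perturbs the \emph{Dulac function} rather than the polynomial, $w_\varepsilon(r)=p_2(r^2)+\varepsilon^2p_2'(r^2)$, checking $W(r_i)=-2r_i^2p_1(r_i^2)(p_2'(r_i^2))^2<0$ at the zeros to get strict negativity for small $\varepsilon$ --- an explicit, fully non-perturbative-in-$p$ construction that sidesteps the collision problem entirely.
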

\begin{proof}
(i) Note that
\[
p_{k,w}(r)=rp(r^2)\left(r^2p'(r^2)\right)'-2k\left(p(r^2)+r^2p'(r^2)\right)r^2p'(r^2)=q_{k}(r),
\]
where $q_k$ is given in Lemma \ref{le1}. By using this lemma the result follows.

(ii) Let $s^*>0$ be one of these roots. It is easy to see that  for
any couple $k$ and $w,$ the polynomial $p_{k,w}(r)$ also vanishes at
$\sqrt{s^*}$. Hence the result follows.

(iii)  We construct a class of polynomials for which the result
holds. Let $p(s)$ be a polynomial such that  $p(s)=p_1(s)p_2(s)$
where $p_1(r^2)+r^2p_1'(r^2)>0$ for all $r\in\mathbb{R}$ and
$p_2(s)=\prod_{i=1}^{j}(s-r_i^2),$ being  $r_i^2$  different
positive numbers. Consider $w(r)=p_2(r^2)$ and $k=1.$ Then
\[
p_{1,w}(r)=-2p^2_2(r^2)\left(p_1(r^2)+r^2p_1'(r^2) \right)\le0\quad\mbox{ for all } r\in\mathbb{R}.
\]
Take now $w_\varepsilon(r)=p_2(r^2)+\varepsilon^2p_2'(r^2).$ Then
\[
p_{1,w_\varepsilon}(r)=p_{1,w}(r)+\varepsilon^2W(r)
\]
for some polynomial $W$ of the same degree  that $p_{1,w}.$
Moreover
\[
W(r_i)=-2r_i^2p_1(r_i^2)(p'_2(r_i^2))^2<0
\]
for all $i=1,\ldots,j.$ Hence for $|\varepsilon|$ small enough the
polynomial  $p_{1,w_\varepsilon}(r)$ is negative for all
$r\in\mathbb{R},$ as we wanted to prove.
\end{proof}

\begin{rema}
Notice that, given a polynomial $p$ under suitable hypotheses, item
(i) of Proposition \ref{nueva} provides a constructive way of
finding Dulac pairs. On the other hand
 item (iii)   provides  a theoretical way to see that the same situation holds for other
 polynomials. Nevertheless,  for a given polynomial
$p,$ even with complex roots, it is not difficult to find a Dulac
pair. We give a couple of examples and some intuition of how we get
them.
 \begin{enumerate}[(a)]
\item For $p(s)=(s-2)(s-4)(s^2+4)(s+3)$ take $w(r)=(r^2-1)(r^2-3)$ and $k$ any of the values
$1/2,1$ or $2.$
\item For $p(s)=-35-36 s+49s^2/2-14 s^3/3+ s^4/4,$ which has a unique positive root $s_0\simeq
11.12$ take $k=1$ and $w(r)=r^2-\alpha$ for any $\alpha$ for
instance in the interval $[5.5,11].$
\end{enumerate}

By Remark \ref{re0} the existence of $k>0$ and $w$ such that
$p_{k,w}<0$ for all $r>0$ implies that we can apply Proposition
\ref{B} with $f=w(r).$ Therefore each one of the hyperbolic limit
cycles of system $\dot r=rp(r^2),\,\dot \theta=\hat p(r^2),$ which
are given by the positive simple zeros of $p$, has to be contained
in one of the 1-punctured regions of $\mathbb{R}^2\setminus\{w=0\}.$
For instance in case (b), there is only one limit cycle $r^2\simeq
s_0$ and we can try with a function $w(r)=r^2-\alpha$ with $\alpha$
smaller than this value. We construct the function $w$ of item (a)
in a similar way.
\end{rema}

Next lemma will be useful to get systems for which the hypotheses of
Theorem~\ref{MT} hold and to prove that the upper bound given by the
theorem is optimal.

\begin{lem}\label{lema} Consider  system
\begin{eqnarray}\label{tri}
  \dot x =&x u(x^2+y^2)-y v(x^2+y^2),\nonumber\\
  \dot y =&x v(x^2+y^2)+y u(x^2+y^2),
\end{eqnarray}
where $u$ and $v$ are arbitrary real polynomials. If $u$ has all
their roots real and simple then, taking $w(r)=r^2u'(r^2),$ the function $\Phi_{1,w}(r)$ introduced
in Theorem~\ref{MT} is negative for all $r\in(0,\infty).$
\end{lem}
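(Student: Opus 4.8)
The plan is to exploit the rotational structure of system~\eqref{tri}: in polar coordinates both $R$ and $\Theta$ turn out to depend only on $r$, which forces the function $M$ of~\eqref{m} to be independent of $\theta$. This collapses the upper bound $\Phi_{1,w}$ onto $M$ itself, after which negativity is exactly Lemma~\ref{le1}(i).

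First I would pass to polar coordinates. Substituting $x=r\cos\theta$, $y=r\sin\theta$ (so that $x^2+y^2=r^2$) into~\eqref{tri} and using the formulas for $R$ and $\Theta$ from Theorem~\ref{MT}, the mixed terms cancel thanks to $\cos^2\theta+\sin^2\theta=1$, and I expect to obtain
\[
R(r,\theta)=r\,u(r^2),\qquad \Theta(r,\theta)=v(r^2),
\]
both independent of $\theta$. In particular $p(r^2)=\frac{1}{2\pi r}\int_0^{2\pi}R\,d\theta=u(r^2)$, so $p=u$ and the hypothesis ``all roots real and simple'' applies to $p$.

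Next I would insert $R$, $\Theta$ and $w(r)=r^2u'(r^2)$ into~\eqref{m} with $k=1$. Since $\Theta$ does not depend on $\theta$, the term $\partial_\theta\Theta$ vanishes, and a direct differentiation gives $\partial_r R+R/r=2u(r^2)+2r^2u'(r^2)$ together with $w'(r)=2ru'(r^2)+2r^3u''(r^2)$. The two contributions to $M$ then share a common term $2r^2u(r^2)u'(r^2)$ which cancels, leaving
\[
M(r,\theta,1,w)=2r^4\bigl(u(r^2)u''(r^2)-(u'(r^2))^2\bigr),
\]
which is precisely $q_1(r)$ in the notation of Lemma~\ref{le1} and, crucially, is independent of $\theta$.

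The key point is then that, because $M$ carries no $\theta$-dependence, each coefficient $M_i(\theta,1,w)$ in~\eqref{m} is a constant, so the sharp choice $m_i(1,w)=\max_\theta M_i=M_i$ yields $\Phi_{1,w}(r)=\sum_i m_i(1,w)\,r^i=M(r,\theta,1,w)$ identically. Finally, Lemma~\ref{le1}(i) applied to $p=u$ gives $u(s)u''(s)-(u'(s))^2<0$ for every real $s$, so evaluating at $s=r^2$ shows $\Phi_{1,w}(r)=2r^4\bigl(u(r^2)u''(r^2)-(u'(r^2))^2\bigr)<0$ for all $r>0$. There is no genuine obstacle beyond verifying the cancellation in $M$; the whole force of the statement comes from the $\theta$-independence, which reduces an a~priori coefficientwise estimate to a single application of Lemma~\ref{le1}(i).
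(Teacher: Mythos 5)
Your proof is correct and takes essentially the same route as the paper: pass to polar coordinates to get $R=ru(r^2)$, $\Theta=v(r^2)$, compute $M(r,\theta,1,w)=2r^4\bigl(u(r^2)u''(r^2)-(u'(r^2))^2\bigr)$ with the same cancellation, identify this $\theta$-independent expression with $\Phi_{1,w}$, and conclude by Lemma~\ref{le1}(i). Your explicit remark that the $\theta$-independence makes the sharp choice $m_i=M_i$ legitimate is a small point the paper leaves implicit, but it is the same argument.
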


\begin{proof} Writing the system in polar coordinates we obtain
\begin{eqnarray*}
  \dot r =& R(r,\theta)=& r u(r^2),\\
  \dot \theta =&\Theta(r,\theta)=&v(r^2).
\end{eqnarray*}
Clearly, from the above expression we get $p=u,$ where $p$ is the
polynomial introduced in Theorem~\ref{MT}. Thus, taking $k=1$ and $w(r)=r^2u'(r^2),$ we obtain
\begin{align*}
M(r,\theta,1,w)&=R(r,\theta)\left(r^2u'(r^2) \right)'-\left(\frac{\partial R(r,\theta)}{\partial r}
+\frac{\partial \Theta(r,\theta)}{\partial \theta}+\frac{R(r,\theta)}{r}\right)r^2u'(r^2)\nonumber\\
&=r u(r^2)\left(r^2u'(r^2)\right)' -\left(\left(ru(r^2)\right)'
+u(r^2)\right)r^2u'(r^2)\nonumber\\
&=2r^4\left(u(r^2)u''(r^2)-\left(u'(r^2)\right)^2\right)=\Phi_{1,w}(r).
\end{align*}
By using Lemma~\ref{le1}.(i) the result follows.
\end{proof}

\begin{rema}\label{opt} Notice that the above lemma implies that the upper
bound  given in Theorem~\ref{MT} can not be improved. Consider in
system~\eqref{tri} a polynomial $u$  such that  all
their roots are real and has no common roots with $v.$ Then system~\eqref{tri} has
as many limit cycles (indeed invariant circles)
 as number  of positive roots, say $m^*$. It is easy to take a polynomial $u$
 under the above hypotheses  such that
 $u'$ has exactly $m^*-1$ positive roots. Hence the number of non-negative roots
 of $w(r)=r^2p'(r^2)=r^2u'(r^2)$ is $m^+=m^*.$ By applying
 Theorem~\ref{MT} we get an upper bound of $m^+=m^*$ limit cycles,
 which  is indeed the actual number of limit cycles of the system.
\end{rema}

\begin{proof}[Proof of Theorem \ref{MT}]
We apply Proposition~\ref{B} to system \eqref{eq2}  with
$f(r,\theta)=w(r)$ and the value $k$ given in the statement of
the Theorem.  Then, by using Lemma~\ref{le2}, we get that the
expression of $M_k$ given in Proposition~\ref{B}  is
\begin{equation*}
  M_k= R(r,\theta)w'(r)
-k\left(\frac{\partial R(r,\theta)}{\partial r}
+\frac{\partial \Theta(r,\theta)}{\partial \theta}+\frac{R(r,\theta)}{r}\right)w(r)=M(r,\theta,k,w),
\end{equation*}
where $M(r,k,\theta,w)$ is the function given in  Theorem~\ref{MT}.

Moreover, by hypothesis, we have
\[
 M_k=M(r,\theta,k,w)=\sum_{i=1}^{n+d-1} M_i(\theta,k,w)r^i\le
  \sum_{i=1}^{n+d-1} m_i(k,w)r^i=\Phi_{k,w}(r)<0
\]
for all $r\in(0,\infty).$ Therefore, by Proposition~\ref{B}, the
maximum number of limit cycles can be bounded above by studying the
topology of the connected components of the set ${\mathcal
W}:=\mathbb{R}^2\setminus\{w(r)=0\}.$ Clearly it has $m^+$ connected
components, all of them  indeed 1-punctured when $w(0)=0$ and it has
$m^++1$ connected components when $w(0)\ne0$. Notice that in this
later case one of them is simply connected and the other $m^+$ are
1-punctured sets. In any case,  again by Proposition~\ref{B}, there
is no limit cycle in the simply connected component and  there is at
most one limit cycle in each of the 1-punctured components, which is
hyperbolic when it exists.  Moreover, since $M_k<0,$ its stability
is given by the sign of $w$ on each component.  As it is already
said in Remark~\ref{re1}.(ii), it can be proved that the bounded
1-punctured components of $\mathcal W$ not having critical points of
system~\eqref{eq2} contain effectively a limit cycle. This result
holds because each one of these  rings is either positively or
negatively invariant by the flow of the system.
\end{proof}

\section{Examples}\label{se3}

\subsection{Example 1}
Consider the system
\begin{align}\label{eqex1}
  \dot x&=x(1-(x^2+y^2))-y(1+2(x^2+y^2))+a xy +b xy^2,\nonumber\\
  \dot y&=x(1+2(x^2+y^2))+y(1-(x^2+y^2))+c y^2+d x^3,
\end{align}
which has at the origin an unstable focus. Let us see that when
$b<8$ and $a,c$ and $d$ are such that
\[
\Psi_{a,b,c,d}(r):={-12}+\left( 2|2a-c|+10|c-a|\right)r+\left(2b-16
+12|b|+15|d|\right)r^2<0\]
for all $r>0,$ then it has at most one limit cycle, which when exists is hyperbolic and stable.

We apply Theorem \ref{MT}. Then $p(s)=((b-8)s+8)/8.$ By using
Proposition \ref{nueva} we take $w(r)=r^2p'(r^2)=(b-8)r^2/8$ and we
 choose $k=7/10.$ Then
\begin{align*}
M(r,\theta&,k,w)=\frac{3(b-8)}{40}r^2+\frac{8-b}{16}\left(
\frac{2a-c}5 \sin(\theta) +(c-a)\sin (3\theta)
\right)r^3\\&+\frac{b-8}{32}\left(\frac{16-2b}{5}+b\left(\frac{7}5\cos(2\theta)-\cos(4\theta)\right)+
d\left(2\sin(2\theta)+\sin(4\theta)\right)\right)r^4.
\end{align*}
Analogously that in the example given in the introduction, we have
\[
M(r,\theta,k,w)\le \frac{8-b}{160}r^2\Psi_{a,b,c,d}(r)<0
\]
for all $r>0$ and we are under the hypotheses of the theorem. Since
$m^+=1$ the uniqueness of the limit cycle follows. It is easy to see
that for $a,b,c$ and $d$ small enough the condition on
$\Psi_{a,b,c,d}$ holds and the limit cycle   exists.

Note that for this system we  can give a simple and  explicit
condition on the parameters of the system under which we can prove
that there is at most one limit cycle. For instance the condition
holds for $a=c=1$ and $-2b=2d=1.$ For these values we have also
checked numerically that the limit cycle actually exists.

\subsection{Example 2}
Consider the system
\begin{align}\label{eqex2}
  \dot x&=x(1-(x^2+y^2))(2-(x^2+y^2))-y+a x^2y +b x^2y^2,\nonumber\\
  \dot y&=x+y(1-(x^2+y^2))(2-(x^2+y^2))+cxy^2.
\end{align}

If $a,b$ and $c$ are such that
\[
\Psi_{a,b,c}(r):=-10+\frac94(|a|+|c|)+\frac94|b|r
+\left(12+|a|+|c|\right)r^2+|b|r^3-4r^4<0
\]
for all $r>0,$ then system \eqref{eqex2} has at most two
(hyperbolic)  limit cycles. Moreover, when they exist, one is
included in the disc $\mathcal{D}:=\{x^2+y^2\le3/2\}$ and is  stable
and the other one is outside the disc and is  unstable.

The proof follows again by using Theorem \ref{MT}. We take $p(s)=2-3s+s^2, $ and by
 Proposition \ref{nueva}, we consider $w(r)=r^2p'(r^2)=r^2(-3+2r^2)$ and
$k=1.$ Then
\begin{align*}
M(r,\theta,k,w)=&
\frac14\left(-40+a\left(6\sin(2\theta) -3\sin(4\theta)\right)+c\left(6\sin(2\theta) +3\sin(4\theta)\right)\right)r^4\\
&+\frac38b\left( 2\cos(\theta)-3\cos(3\theta)+\cos(5\theta)    \right)r^5\\
&+\left(12+a\sin(4\theta)-c\sin(4\theta)\right)r^6
-\frac{b}2\left(-\cos(3\theta)+\cos(5\theta)\right)r^7-4r^8.
\end{align*}
Hence, for the values of the parameters considered, we have
 \[
M(r,\theta,k,w)\le r^4\Psi_{a,b,c}(r)<0
\]
for all $r>0.$ Thus we can apply Theorem \ref{MT} with $m^+=2,$
proving the existence of at most two (hyperbolic) limit cycles.

For instance the condition on $\Psi_{a,b,c}$ holds for $a=1/8$,
$b=1/15$ and $c=1/20$. Moreover for these parameters it is not
difficult to prove, by using resultants and the Sturm's rule, that
the origin is the unique critical point, which  is unstable.
Finally, by studying the flow on $\{x^2+y^2=R^2\},$ for $R$ big
enough, and on $\{x^2+y^2=3/2\},$ we prove the existence of both
limit cycles.

\subsection{Example 3}
Consider the system
\begin{align}\label{eqex3}
  \dot x&=x(1-(x^2+y^2))(2-(x^2+y^2))-y(1-(x^2+y^2))+a x^4 +b x^2y^2,\nonumber\\
  \dot y&=x(1-(x^2+y^2))+y(1-(x^2+y^2))(2-(x^2+y^2)).
\end{align}
If $a$ and $b$ are such that
\[
\Psi_{a,b}(r):=-10+\frac{27|a|+9|b|}{4}r+12r^2+\left(2|a|
+|b|\right)r^3-4r^4<0\] for all $r>0,$  Theorem \ref{MT} will allow
us to show that system \eqref{eqex3} has at most two  limit cycles.
Moreover we will see that when there is no critical point outside
the disc $\mathcal{D}:=\{x^2+y^2\le3/2\}$ an unstable hyperbolic
limit cycle always exists outside the disc $\mathcal D$ and   it
surrounds several critical points.

We take $p, w$ and $k$ as in the previous example. Then
\begin{align*}
M(r,&\theta,k,w)=-10r^4+12r^6-4r^8\\
&+\frac38\left(a\left(14\cos(\theta) +3\cos(3\theta)-\cos(5\theta)\right)
+b\left(2\cos(\theta)-3\cos(3\theta)+\cos(5\theta)\right)\right)r^5\\
&+\frac12\left(a\left(-2\cos(\theta) +\cos(3\theta)+\cos(5\theta)\right)
+b\left(\cos(3\theta)-\cos(5\theta)\right)\right)r^7.
\end{align*}
Hence, when the conditions  on the parameters hold we have
\[
M(r,\theta,k,w)\le r^4\Psi_{a,b}(r)<0
\]
for all $r>0$ and we can apply Theorem \ref{MT}. In this case since
$m^+=2$ we know that system \eqref{eqex3} has at most two limit
cycles, and whenever they exist, one is inside the disc $\mathcal D$
and is hyperbolic and stable and the other one is outside the disc,
and is hyperbolic and unstable.  Since system \eqref{eqex3} has
 several critical points in the disc  $\mathcal D,$ $(0,0)$ and $(0,\pm1)$ among them,
 the unstable limit cycle, when it exists, surrounds these points.

An example of parameters for which $\Psi_{a,b}$ is negative is
$a=1/20$ and $b=1/15.$ Using the same tools that in the previous
case we can prove that the system has exactly five  critical points
and all them  are inside $\mathcal D.$ Hence by studying the flow on
the boundary of $\mathcal D$ and on $\{x^2+y^2=R^2\},$ for $R$ big
enough, we prove the existence of an unstable limit cycle
surrounding $\mathcal D.$

In short, for these values of the parameters we have proved that
system \eqref{eqex3} has  at most two limit cycles and the existence
of a limit cycle, which surrounds the five real critical points of
the system, which are in $\mathcal D.$ Our numerical explorations
indicate that this limit cycle is unique.

We want to stress that there are very few results in the literature
giving upper bounds for the number of limit cycles surrounding
several critical points. Notice that the critical points different
of the origin, surrounded by the limit cycle, come from the continua
of critical points $\{x^2+y^2=1\}$ that system \eqref{eqex3}
possesses when $a=b=0.$ The limit cycle is born in $\{x^2+y^2=2\}.$

\subsection{Example 4} Consider the system
\begin{align*}
  \dot x&=x(1-(x^2+y^2))(2-(x^2+y^2))(3-(x^2+y^2))-y(2-(x^2+y^2))+a x^2y^3,\\
  \dot y&=x(2-(x^2+y^2))+y(1-(x^2+y^2))(2-(x^2+y^2))(3-(x^2+y^2)).
\end{align*}
Let us prove that when $a$ is such that
\[
\Psi_{a}(r):=-98+\left(192+\frac{55}8|a|\right)r^2+\left(-144+6|a|\right)r^4+\left(48+\frac{3}2|a|\right)r^6-6r^{8}<0\]
for all $r>0,$ it has at most 3 limit cycles. Once more we use Theorem \ref{MT}. In this occasion we take  $k=1,$
$p(s)=(1-s)(2-s)(3-s)$ and $w(r)=r^2p'(r^2)=r^2(-11+12r^2-3r^4)=-3r^2(r^2-2-\frac{\sqrt3}3)(r^2-2+\frac{\sqrt3}3).$  Then
\begin{align*}
M(r,\theta,k,w)&=-98 r^4+\frac1{16}\left(3072 + 55 a\sin(2\theta) -44a\sin(4\theta)+11a\sin(6\theta)\right)r^6\\
&+\frac12 \left(-288 -3a\sin(2\theta) +6a\sin(4\theta)-3a\sin(6\theta)\right)r^8\\
&+\frac3{16} \left(256 -a\sin(2\theta) -4a\sin(4\theta)+3a\sin(6\theta)\right)r^{10}-6r^{12}.
\end{align*}
Since $m^+=3$ and $M(r,\theta,k,w)\le r^4\Psi_{a}(r)<0$ for all
$r>0,$   the existence of at most 3 limit cycles follows. For
instance the above hypothesis holds for $a=1/34.$ For this value of
the parameter we prove, by using the same tools that in the previous
examples, that the system has several critical points and that,
apart from the origin, all of them are contained in the ring
$\mathcal{C}=\{2-\frac{\sqrt3}3<x^2+y^2<2+\frac{\sqrt3}3\}.$
Finally, again similarly that in the previous cases, we prove that
there is exactly one  hyperbolic and stable limit cycle inside the
disc $\{x^2+y^2<2-\frac{\sqrt3}3\}$ and another hyperbolic and also
stable limit cycle outside the disc
$\{x^2+y^2\ge2+\frac{\sqrt3}3\}.$

The novelty of this example is the existence of a non-trivial system
for which the maximum number of limit cycles is known (it is 3).
Moreover it has  at least two hyperbolic limit cycles, one of them
surrounding only the origin and the other one surrounding the first
limit cycle and having several critical points between them.

We want to comment that our numerical exploration seems to indicate
that there is no limit cycle contained in~$\mathcal C,$ and so that
the maximum number of limit cycles of the system for this value of
the parameter is two.

\subsection{Example 5} This last example is  interesting from a theoretical point of view.
Consider the system
\begin{eqnarray}\label{triper}
  \dot x =&x u(x^2+y^2)-y v(x^2+y^2)+\varepsilon \widetilde{P}(x,y),\nonumber\\
  \dot y =&x v(x^2+y^2)+y u(x^2+y^2)+\varepsilon \widetilde{Q}(x,y),
\end{eqnarray}
where $u$ and $v$ are given real polynomials of degree $j$ and
assume $u$ is such that  all their roots are real and
simple. Let $m^+-1$ denote the number of positive real roots of $u'.$
Then, for any couple of polynomials $\widetilde{P}(x,y)$ and
$\widetilde{Q}(x,y)$ whose monomials have  degrees between $2$ and
$2j+1,$ both included, there exists
$\varepsilon_0=\varepsilon_0(\widetilde{P},\widetilde{Q} )>0$ such
that if $|\varepsilon|<\varepsilon_0$ then system \eqref{triper}
is under the hypotheses of Theorem \ref{MT}. Moreover, under these conditions, it
has at most $m^+$ limit cycles and all the existing limit cycles
are hyperbolic.

When $\varepsilon=0,$ the above assertions follow from
Lemma~\ref{tri} and Theorem~\ref{MT}. Consider $p(s)\equiv u(s).$ By
using Proposition~\ref{nueva} we can take $w(r)=r^2u'(r^2)$ and fix
a value $k>0$ for which
\[
p_{k,w}(r):=2 r^4\left(p(r^2)p''(r^2)-k(p'(r^2))^2\right)+ 2(1-k)r^2p(r^2)p'(r^2)<0
\]
for all $r\in(0,\infty).$ The function $p(s,\varepsilon)$ for
system~\eqref{triper} defined in Theorem~\ref{MT} writes as
$p(s,\varepsilon)=u(s)+\varepsilon \widetilde{p}(s),$ for some new
polynomial $\widetilde{p},$ also of degree $j.$ It is clear that for
$|\varepsilon|$ small enough, the polynomial $u(s)+\varepsilon
\widetilde{p}(s)$ has also all its roots real and simple and the
number of positive roots of  $u'(s)+\varepsilon \widetilde{p}\,'(s)$
is $m^+-1.$ By taking the same value of $k$ and
$w(r,\varepsilon)=r^2\left(u'(r^2)+\varepsilon
\widetilde{p}\,'(r^2)\right)$ we get that the function $M$ in the
theorem writes as
\begin{align*}
M(r,\theta,k,w(r,\varepsilon),\varepsilon):&=R(r,\theta,\varepsilon)\frac{\partial w(r,\varepsilon)}{\partial r}\\
&-k\left(\frac{\partial R(r,\theta,\varepsilon)}{\partial r}
+\frac{\partial \Theta(r,\theta,\varepsilon)}{\partial \theta}+\frac{R(r,\theta,\varepsilon)}{r}
\right)w(r,\varepsilon)\nonumber\\
&=:\sum_{i=2}^{4j} M_i(\theta,k,\varepsilon)r^i=p_{k,w}(r)+\varepsilon\sum_{i=2}^{4j}
\widetilde{M_i}(\theta,k,\varepsilon)r^i,
\end{align*}
where the functions $\widetilde{M_i}$ are smooth and $2\pi$-periodic
in $\theta.$ Since $p_{k,w}(r)$ is a negative polynomial in $(0,\infty)$
 of the form $p_{k,w}(r)=b_2r^2+b_4r^4+\cdots+b_{4j}r^{4j},$ for some
$b_i$ real numbers and,  moreover $b_2<0$ and $b_{4j}<0,$ it is
clear that for $|\varepsilon|$ small enough  the function
$\Phi_k(r,\varepsilon)$ given in Theorem~\ref{MT} is negative as we wanted to prove. Moreover
it follows that the maximum number of limit cycles of system~\eqref{triper} is
$m^+,$ and whenever they exist they are hyperbolic.

Notice that it is always true that when a planar system has $m^+$
hyperbolic limit cycles any small perturbation also has at least the
same number of hyperbolic limit cycles. The point of the above
example is not only to prove the existence of at most this number of
limit cycles, but to prove that there are planar polynomial systems
under the hypotheses of Theorem \ref{MT} for which all planar
systems of the same degree near them are also under the hypotheses
of the theorem. Moreover, as we can see in the study of the previous
examples, our approach allows  to get  explicit bounds for the size
of the perturbation under which our theorem can be applied.

\vspace{1cm}

 {\bf Acknowledgement.}  The first author is partially supported by a MEC/FEDER
grant number MTM2005-06098-C02-01 and by a CICYT grant number
2005SGR 00550. He also  wants to thank  the Laboratoire de
Ma\-th\'{e}\-ma\-tique et Physique Th\'{e}orique of the  Universit\'{e} de Tours
for their support and hospitality during the period in which this
paper was written.

\end{document}